\documentclass[12pt,letterpaper, reqno]{amsart}

\textheight=8.0  true in
\textwidth=6.5 true in
\hoffset=-0.6true in

\usepackage{amssymb,latexsym, amsmath, amsxtra}
\usepackage[dvips]{graphics}

\theoremstyle{plain}
\newtheorem{theorem}{Theorem}[section]
\newtheorem{corollary}[theorem]{Corollary}

\newtheorem{lemma}[theorem]{Lemma}
\newtheorem{proposition}[theorem]{Proposition}
\theoremstyle{definition}

\theoremstyle{remark}


\numberwithin{equation}{section}
\numberwithin{theorem}{section}
\numberwithin{table}{section}
\numberwithin{figure}{section}

\newcommand{\D}{\mathbb D}

\newcommand{\C}{\mathbb C}

\def\({\left(}
\def\){\right)}

\begin{document}
\title[Approximations with zeros on a circle]{Approximation by polynomials and  Blaschke products having all zeros on a circle}
\author{David W. Farmer and Pamela Gorkin}

\address{
{\parskip 0pt
American Institute of Mathematics\endgraf
farmer@aimath.org\endgraf
\null
Department of Mathematics\endgraf
Bucknell University\endgraf
pgorkin@bucknell.edu\endgraf
}
  }

\subjclass[2000]{Primary 30C15; 30A82} \keywords{Blaschke products,
polynomials, approximation}

\begin{abstract}
We show that a nonvanishing analytic function on a domain in the
unit disc can be approximated by (a scalar multiple of) a
Blaschke product whose zeros lie on a prescribed circle enclosing the domain.
We also give a new proof of the analogous classical result for polynomials.
A connection is made to universality results for the Riemann zeta function.
\end{abstract}

\maketitle

\section{Introduction}

While every analytic function on a domain can be approximated pointwise on
the domain by a polynomial,  there are many other interesting questions
that can be asked about the approximating polynomials; for example,
can the sequence of approximating polynomials be chosen to be uniformly
bounded? In this paper, we focus on the case in which the domain is the
open unit disc and we ask where the zeros of the polynomials will lie.

When our attention is focused on bounded analytic functions, there is
another class of functions that can be used to approximate our bounded
analytic function: the set of Blaschke products. A finite Blaschke
product is a function of the form 
\begin{equation}
B(z) = \lambda \prod_{j = 1}^N
\frac{z - a_j}{1 - \overline{a_j} z},
\end{equation}
where $|a_j| < 1$ for all $j$
and $|\lambda| = 1$. Caratheodory's theorem (see \cite[p. 6]{G}, for
example) shows that if $f$ is an analytic function defined on the open
unit disc, $\mathbb{D}$, and $f$ is bounded by $1$ in modulus, then there
is a sequence $\{B_k\}$ of finite Blaschke products converging to $f$
pointwise on $\mathbb{D}$. Again, it is certainly interesting to ask
where the zeros of the approximating Blaschke products may lie.

Looking at more general domains, one natural question is the following:
Given a holomorphic function in a Jordan region, when can it be
approximated by a polynomial with zeros lying on the boundary? This
question was answered by G. MacLane \cite{GM} in 1949. Curiously,
many texts dealing with the study of polynomials or approximation
by polynomials do not include reference to this work (\cite{M},
\cite{SS}), though three different proofs of this result are given by Korevaar \cite{K}.

MacLane's work focused on showing that a zero-free holomorphic
function can be approximated by a polynomial with zeros on the boundary,
when the boundary satisfies certain smoothness conditions. C. Chui
 \cite{C}, \cite{C1} looked at the problem of bounded approximation
of a zero-free bounded holomorphic function by what he called
$C$-polynomials. Chui showed that every zero-free bounded holomorphic
function defined on $\mathbb{D}$, can be boundedly approximated by
polynomials with zeros lying on the unit circle. In 1968, Z. Rubinstein
showed that given a zero-free holomorphic function with $f(0) = 1$, there
exists a sequence of $C$-polynomials mapping $0$ to $1$ that converges
to $f$ uniformly on every compact set in $\mathbb{D}$. In addition,
when the function $f$ is bounded, the sequence converges boundedly.
One natural approach, that of looking at the zeros of the partial sums of a series,
has been further studied by Korevaar and others. This and related work can be found in
 \cite{K1}, \cite{K3}, and \cite{K2}.

In this paper, we will focus on polynomials as well as Blaschke
products. In general, a Blaschke product is a function of the form 
\begin{equation}
B(z)
= \lambda z^n \prod_{j = 1}^\infty \frac{-\overline{a_j}}{|a_j|}\frac{z
- a_j}{1 - \overline{a_j} z},
\end{equation}
where $n$ is a nonnegative integer,
$0< |a_j| < 1$, $|\lambda| = 1$, and $\sum_j (1 - |a_j|) < \infty$.
As indicated above, when there are finitely many
$a_j$,  the function is said to be a finite Blaschke product; the term
Blaschke product is used when the distinction between finite and infinite
products is unimportant. The Blaschke product is determined by its zeros
and every bounded analytic function on $\mathbb{D}$ can be written as
a product of a Blaschke product and an analytic function with no zeros
on $\mathbb{D}$. In fact, every bounded analytic function $f$ has what is known as an inner-outer factorization;
that is, there exists an analytic function $I$, bounded by $1$ in
the unit disc, with $|I| = 1$ almost everywhere on the unit circle,
and an outer function $g$ with the property that $f = I g$. The inner
function $I$ can be further factored into a (possibly infinite) Blaschke
product, defined with the zeros of the function $f$, and a singular
inner function. Though the singular inner function will have no zeros
in $\mathbb{D}$, Frostman's theorem \cite[p. 79]{G}, shows that it can
be uniformly approximated by a Blaschke product. Therefore, every inner
function can be uniformly approximated by Blaschke products.

R. Douglas and W. Rudin \cite{DR} showed that such results can be
applied to another very important algebra of functions. In fact,
as they showed, every essentially bounded measurable function $u$
with modulus $1$ a.e. on the unit circle can be approximated using Blaschke products and
their conjugates. More precisely, given $\varepsilon > 0$, there exist
Blaschke products $b_1$ and $b_2$ such that $\|u - b_1 \overline{b_2}\|
< \varepsilon$. Several interesting results on these algebras followed,
and then, in 1982, P. Jones \cite{PJ} showed that the Blaschke products
above could be chosen to be interpolating Blaschke products; that is,
the zero sequence $\{z_n\}$ of the Blaschke product has the property
that given any bounded sequence $\{w_n\}$ of complex numbers, there
exists a bounded analytic function $f$ such that $f(z_n) = w_n$. A
characterization of such sequences due to L. Carleson~\cite{Ca} makes
such sequences quite easy to deal with. In particular,  the zeros of the
interpolating Blaschke products are separated in an extremely useful way,
and as a consequence interpolating Blaschke products have many desirable
properties. In trying to understand these results, Jones and Garnett
\cite[p. 430]{G} asked whether every Blaschke product can be uniformly
approximated by interpolating Blaschke products. While many interested
related results exist (see, for example, \cite{HN} and \cite{GMN} for
recent results ) this problem remains open. Our interest in controlling
the zeros of approximating Blaschke products is directly related to this
very important question. If we can control the placement of the zeros
of an approximating Blaschke product, we have a chance of constructing
approximating interpolating Blaschke products.

Our main result (Corollary \ref{cor:blaschke0})
is that given an
analytic function $g$ that has no zeros
in a neighborhood of $\{z: |z| \le r\}$,
for all $\varepsilon > 0$ and $\delta > 0$ 
there exists
a constant $c_B$ and a finite
Blaschke product $B$ with all zeros on the circle $\{z: |z| = r\}$ such
that 
\begin{equation}
|g(z) - c_B B(z)| < \varepsilon
\end{equation}
on 
$\{z: |z| < r - \delta \}$. Our approach also provides a
new and relatively simple proof of
the fact that a nonvanishing analytic function can
be approximated
uniformly on compacta
by polynomials having zeros on a prescribed circle.

The paper is organized as follows.  In Section~\ref{sec:around0}
we consider
the special case of discs centered at~$0$.   Then in
Section~\ref{sec:pseudohyperbolic} we use the special case to prove more
general cases. In Section~\ref{sec:rmt} we discuss the relationship
with universality results for the
Riemann zeta function and its connection to random matrix theory.

\section{Approximation around 0}\label{sec:around0}

Here is a very simple proof that a polynomial $p$ that does not vanish on a neighborhood of the closure of the unit disc can be approximated uniformly on compacta by polynomials with zeros on the unit circle:
 
Note that if the degree of $p$ is $m$, then the polynomial $p^\star(z)
= z^m \overline{p(1/\overline{z})}$ has all zeros inside the unit disc. Let
\begin{equation}
B(z) = p^\star(z)/p(z).
\end{equation}
Then $B$ is analytic in $\mathbb{D}$,
continuous on the unit circle, maps $\mathbb{D}$ to itself, the
unit circle to itself, and the complement of the closed disc to
itself. Therefore $B$ is a Blaschke product. Now for $k \in \mathbb{N}$,
the set of points in $\overline{\mathbb{D}}$ for which $B(z) = z^{-k}$
lie on the unit circle.  Therefore, the polynomial 
\begin{equation}
p(z) + z^k
p^\star(z)
\end{equation}
has all its zeros on the unit circle and approximates $p$
on compacta as $k \to \infty$. This is, more or less, the proof given
by Z. Rubinstein \cite{R} in 1968. However,
it does not seem possible to adapt this proof to the case
of Blaschke products.
We present an alternate proof of this result, before turning
to our theorem on Blaschke products.

First we consider the case of approximating on a disc centered
at~0.

\begin{theorem}\label{thm:around0}
Suppose $f$ is analytic and nonvanishing in a neighborhood of
$|z|\le r$.  Then there exist numbers $|\xi_j|=|\eta_j|=1$,
$A\in \C$,
and positive integers $\nu(j)$
such that
\begin{equation}
f(z)=
A \prod_{j=1}^\infty (1+ \xi_j z^j)^{\nu(j)}
(1+\eta_j z^j),
\end{equation}
for $|z|<\min\{r,1\}$, with the convergence uniform on
$|z|<\min\{r,1-\delta\}$ for any~$\delta>0$.

In particular, $f$ can be approximated on $|z|<\min\{r,1-\delta\}$
by polynomials having all roots on the unit circle.
\end{theorem}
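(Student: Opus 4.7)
The plan is to build the infinite product factor by factor, matching the Taylor expansion of $f$ at the origin one coefficient at a time. Normalize by writing $f = A g$ with $A = f(0) \ne 0$ and $g = 1 + a_1 z + a_2 z^2 + \cdots$.

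The underlying observation is that every $w \in \C$ can be written as $\nu\xi + \eta$ with $|\xi| = |\eta| = 1$ and $\nu$ a positive integer satisfying $\nu \le |w| + 1$: for fixed $\nu \ge 1$, the set $\{\nu\xi + \eta : |\xi|=|\eta|=1\}$ is the closed annulus $\{\nu-1 \le |z| \le \nu+1\}$ (a closed disc of radius $2$ when $\nu = 1$), and these annuli cover $\C$. Using this, I would define partial products
\[
P_J(z) = \prod_{j=1}^J (1 + \xi_j z^j)^{\nu(j)}(1 + \eta_j z^j)
\]
inductively: given $P_{J-1}$, let $b_J$ be its $z^J$-coefficient; the next factor expands as $1 + (\nu(J)\xi_J + \eta_J)z^J + O(z^{J+1})$, so multiplying by it leaves all lower coefficients fixed and shifts the $z^J$-coefficient to $b_J + \nu(J)\xi_J + \eta_J$. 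Choose $(\nu(J), \xi_J, \eta_J)$ via the observation so that $\nu(J)\xi_J + \eta_J = a_J - b_J$. Then $P_J$ agrees with $g$ through order $z^J$, and each $P_J$ has all its zeros on the unit circle, since the roots of $1 + \xi z^j$ are the $j$-th roots of $-\overline{\xi}$.

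The main obstacle is convergence of the infinite product, which I would handle by proving an exponential bound $\nu(k) \le C\rho^{-k}$ for some $\rho > r$. Since $P_J$ and $g$ agree through order $z^J$ and both equal $1$ at $0$, their principal branches of logarithm also agree through order $z^J$ on a neighborhood of $0$. Expanding $\log P_J = \sum_{j=1}^J[\nu(j)\log(1 + \xi_j z^j) + \log(1 + \eta_j z^j)]$ and matching with $\log g = \sum_k d_k z^k$ at order $k \le J$ yields
\[
d_k = \bigl(\nu(k)\xi_k + \eta_k\bigr) + \sum_{\substack{j \mid k \\ j < k}} \frac{(-1)^{k/j+1}}{k/j}\bigl(\nu(j)\xi_j^{k/j} + \eta_j^{k/j}\bigr).
\]
Cauchy estimates on $\log g$ (analytic on a neighborhood of $|z| \le r$) give $|d_k| \le C_0 R^{-k}$ for some $R > r$. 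Fixing $\rho \in (r, R)$ and assuming inductively that $\nu(j) \le C\rho^{-j}$ for $j < k$, the proper-divisor sum (over divisors $j \le k/2$) contributes at most $O(\rho^{-k/2}) + O(k)$, which is negligible next to $\rho^{-k}$. Taking $C$ sufficiently large to absorb the leading constants and the finitely many small values of $k$ closes the induction.

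Given this bound, $\sum_j (\nu(j) + 1)(\rho')^j < \infty$ for every $\rho' < \min(r,1)$, and uniform convergence of the infinite product on $|z| \le \rho'$ follows from the standard criterion together with $|(1 + \xi z^j)^\nu(1 + \eta z^j) - 1| = O((\nu+1)|z|^j)$ for $|z|^j$ small. The limit is analytic with the same Taylor coefficients as $g$, hence equals $g$; multiplying by $A$ yields the claimed representation of $f$, and the polynomial partial products $P_J$ furnish the desired approximation by polynomials with all roots on the unit circle.
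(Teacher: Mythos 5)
Your argument is correct and essentially the paper's own: the same annulus decomposition $w=\nu\xi+\eta$ (the paper's Lemma~\ref{lem:annulus}), the same inductive coefficient matching, the same divisor-sum recursion obtained from the logarithm (the paper runs it through $f'/f$ rather than $\log f$, a purely cosmetic reindexing of the same identity, and since matching $P_J$ with $g$ to order $z^J$ is equivalent to matching their logarithmic derivatives, your construction and the paper's admit exactly the same choices of $(\nu(j),\xi_j,\eta_j)$), and the same induction on $\nu(k)\le C\rho^{-k}$ exploiting that proper divisors of $k$ are at most $k/2$, which is the paper's Lemma~\ref{lem:boundnu}. One small adjustment: the induction requires $\rho<1$, not merely $\rho\in(r,R)$ (otherwise $C\rho^{-k}\to 0$ contradicts $\nu(k)\ge 1$ and the divisor sum is no longer negligible against $\rho^{-k}$), so take $\rho\in\bigl(r,\min\{R,1\}\bigr)$ when $r<1$ and $\rho=1-\delta$ when $r\ge 1$; the Cauchy bound $|d_k|\le C_0R^{-k}\le C_0\rho^{-k}$ only improves under this replacement, exactly as in the paper's choice of $\kappa$.
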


\begin{theorem}\label{thm:denominator0}
Under the same conditions as Theorem~\ref{thm:around0}, if $R<1$ then
\begin{equation}
f(z)=
A \prod_{j=1}^\infty \left(\frac{1+ \xi_j z^j}{1+ \xi_j R^j z^j}\right)^{\nu(j)}
\frac{1+\eta_j z^j}{1+\eta_j R^j z^j},
\end{equation}
for $|z|<\min\{r,1\}$, with the convergence uniform on
$|z|<\min\{r,1-\delta\}$ for any~$\delta>0$.
\end{theorem}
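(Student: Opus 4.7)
My plan is to reduce Theorem~\ref{thm:denominator0} to Theorem~\ref{thm:around0} by a telescoping trick. Define
\[
g(z) = \prod_{n=0}^\infty \frac{f(R^n z)}{f(0)}.
\]
Since $f/f(0)-1$ is analytic in a neighborhood of $|z|\le r$ and vanishes at the origin, it satisfies $|f(w)/f(0) - 1| = O(|w|)$ uniformly on some disc $|w|\le r'$ with $r'>r$. Hence for $n$ large, $|f(R^n z)/f(0)-1| = O(R^n)$ uniformly on $|z|\le r'$, and the standard theory of infinite products shows that $g$ is analytic and nonvanishing on a neighborhood of $|z|\le r$.

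The heart of the proof is the telescoping identity
\[
\frac{g(z)}{g(Rz)} = \frac{f(z)}{f(0)},
\]
which follows from $g(Rz) = \prod_{n\ge 0} f(R^{n+1}z)/f(0) = \prod_{m\ge 1} f(R^m z)/f(0)$. Applying Theorem~\ref{thm:around0} to $g$ gives
\[
g(z) = A' \prod_{j=1}^\infty (1+\xi_j z^j)^{\nu(j)}(1+\eta_j z^j),
\]
uniformly on $|z|<\min\{r,1-\delta\}$. Substituting $Rz$ for $z$ in this formula and using $(Rz)^j = R^j z^j$ yields a uniformly convergent representation of $g(Rz)$ on (at least) the same disc, since $R<1$. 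Dividing $g(z)$ by $g(Rz)$, multiplying by $f(0)$, and setting $A = f(0)$ produces
\[
f(z) = A \prod_{j=1}^\infty \left(\frac{1+\xi_j z^j}{1+\xi_j R^j z^j}\right)^{\nu(j)} \frac{1+\eta_j z^j}{1+\eta_j R^j z^j},
\]
which is the desired representation.

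The main technical point is the convergence check in the first step: one must confirm that $g$ is analytic and nonvanishing on a genuine neighborhood of $|z|\le r$, not merely on the open disc, so that Theorem~\ref{thm:around0} applies to $g$ with the same value of~$r$. Beyond this, the proof is entirely formal: the substitution $z\mapsto Rz$ converts each factor $(1+\xi_j z^j)$ from Theorem~\ref{thm:around0} into the denominator factor $(1+\xi_j R^j z^j)$ of Theorem~\ref{thm:denominator0}, so the two representations match automatically.
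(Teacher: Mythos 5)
Your proof is correct, but it takes a genuinely different route from the paper's. The paper proves Theorem~\ref{thm:denominator0} by re-running the construction from the proof of Theorem~\ref{thm:around0}: writing $h$ for the partial product with denominators, it uses the identity $\tfrac{h'}{h}(z)=\tfrac{g'}{g}(z)-R\,\tfrac{g'}{g}(Rz)$, so the Taylor coefficients of $h'/h$ are $c_k=(1-R^{k+1})b_k$; one then matches $a_k/(1-R^{k+1})$ instead of $a_k$ at each inductive step and re-verifies the growth bound on $\nu(j)$ via Lemma~\ref{lem:boundnu}. You instead deduce Theorem~\ref{thm:denominator0} from Theorem~\ref{thm:around0} as a black box, via the telescoping product $g(z)=\prod_{n\ge 0}f(R^nz)/f(0)$ with $g(z)/g(Rz)=f(z)/f(0)$: the convergence check you flag does work (since $|f(w)/f(0)-1|\le C|w|$ on a closed disc $|w|\le r'$ with $r'>r$, the tails are $O(R^n)$ uniformly, and each factor is nonvanishing there), and substituting $z\mapsto Rz$ in the representation of $g$ and dividing produces exactly the required factors; the only small point worth making explicit is that the quotient of the two uniformly convergent partial products converges uniformly because $g(Rz)$ is bounded away from zero on the closed disc $|z|\le\min\{r,1-\delta\}$, whose image under $z\mapsto Rz$ sits strictly inside the region of nonvanishing. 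Your reduction is cleaner in that it avoids repeating the $\nu(j)$ estimates, at the cost of introducing the auxiliary function $g$; the paper's version keeps the parameters $\xi_j,\eta_j,\nu(j)$ tied directly to the Taylor coefficients of $f'/f$, which makes the construction more explicit and is the form reused in the proof of Corollary~\ref{cor:blaschke0}.
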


\begin{corollary}\label{cor:blaschke0} Suppose $g$ is analytic and nonvanishing in a neighborhood of
$|z|\le r<1$. 
For all $\varepsilon>0$ and $\delta>0$ there exists a
constant $c_B$ and a
Blaschke product $B$ having all zeros on $|z|=r$
such that
\begin{equation}
\left|g(z)- c_B B(z)\right|  < \varepsilon
\end{equation}
for $|z|<r-\delta$.
\end{corollary}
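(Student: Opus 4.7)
The plan is to derive the corollary from Theorem~\ref{thm:denominator0} by a scaling argument combined with an explicit rewriting of each rational factor as a Blaschke factor. The crucial observation is that with the right choice of the auxiliary parameter $R$, the denominator of each factor in Theorem~\ref{thm:denominator0} will have zeros at the inverse conjugates of the numerator zeros, which is exactly what is needed to produce Blaschke factors of the form $(z-a)/(1-\overline{a}z)$.

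First I would set $h(z)=g(rz)$, so that $h$ is analytic and nonvanishing in a neighborhood of $|z|\le 1$. Apply Theorem~\ref{thm:denominator0} to $h$ with $R=r^2$ (which is less than $1$ since $r<1$) and then substitute $z\mapsto z/r$; the resulting expansion for $g$ converges uniformly on $|z|<r(1-\delta')$, and choosing $\delta'$ so that $r(1-\delta')\ge r-\delta$ gives approximation on the desired set. Each factor in the expansion for $g$ now has the form
\begin{equation}
\frac{1+\xi_j z^j/r^j}{1+\xi_j r^j z^j},
\end{equation}
whose numerator vanishes at the $j$th roots $a_1,\dots,a_j$ of $-r^j/\xi_j$ (all on $|z|=r$) and whose denominator vanishes at the points $1/\overline{a_k}$. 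A short computation using $z-1/\overline{a}=-\overline{a}^{\,-1}(1-\overline{a}z)$ then yields
\begin{equation}
\frac{1+\xi_j z^j/r^j}{1+\xi_j r^j z^j}
=\frac{\xi_j}{r^j}\prod_{k=1}^{j}\frac{z-a_k}{1-\overline{a_k}z},
\end{equation}
i.e.\ a scalar times a finite Blaschke product with all zeros on $|z|=r$. The same identification applies to the $\eta_j$ factors.

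To finish, for a given $\varepsilon>0$ use the uniform convergence in Theorem~\ref{thm:denominator0} to truncate the infinite product at some $N$ so that the resulting partial product approximates $g$ within $\varepsilon$ on $|z|<r-\delta$. Collect the finitely many scalar prefactors (together with the overall constant $A$) into a single constant $c_B$, and multiply together the finitely many Blaschke factors to produce a single finite Blaschke product $B$ with all zeros on $|z|=r$; then $c_B B$ is the required approximant. The main conceptual step is the choice $R=r^2$, forced by the requirement that the denominator zeros of each factor lie at the inverse points $1/\overline{a_k}$; once that is made, the remainder is bookkeeping.
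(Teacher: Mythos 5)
Your proposal is correct and follows essentially the same route as the paper: apply Theorem~\ref{thm:denominator0} with $R=r^2$ to the rescaled function $g(rz)$, substitute back $w=rz$, and recognize each factor $\frac{1+r^{-j}\xi_j w^j}{1+\xi_j r^j w^j}$ as a unimodular-times-$r^{-j}$ constant multiplied by a Blaschke factor with zeros on $|w|=r$, then truncate and absorb the constants into $c_B$. The only cosmetic difference is that the paper leaves each factor in the form $(\alpha_j+w^j)/(1+\overline{\alpha_j}w^j)$ (a M\"obius map composed with $w^j$) rather than splitting it into degree-one Blaschke factors as you do.
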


Before giving the proof of
Theorems~\ref{thm:around0} and~\ref{thm:denominator0}, we describe the
construction in a context that avoids the issue of convergence.

\subsection{Formal power series as infinite products}\label{sec:formal}

\begin{proposition}
\label{prop:formalproduct}
Suppose
\begin{equation}
f(z)=\sum_{n=0}^\infty a_n z^n 
\end{equation}
is a formal power series with $a_n\in \C$ and $a_0 \not = 0$.
We can write
\begin{equation}
f(z)= 
a_0 \prod_{j=1}^\infty (1+ \xi_j z^j)^{\nu(j)}
(1+\eta_j z^j),
\end{equation}
where $|\xi_j|=|\eta_j|=1$ and $\nu(j)$ is a nonnegative integer.
\end{proposition}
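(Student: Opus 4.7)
The plan is to build the factors $(1+\xi_j z^j)^{\nu(j)}(1+\eta_j z^j)$ one index at a time by an inductive matching of coefficients. After rescaling, assume $a_0=1$. Suppose that at stage $j \geq 1$ the factors with indices $1, \ldots, j-1$ have already been chosen, and consider the residual formal power series
\begin{equation}
f_j(z) := \frac{f(z)}{\displaystyle\prod_{i=1}^{j-1}(1+\xi_i z^i)^{\nu(i)}(1+\eta_i z^i)},
\end{equation}
a legitimate quotient in $\C[[z]]$ since the denominator has constant term $1$. The inductive hypothesis is that $f_j$ has the form $1 + c_j z^j + O(z^{j+1})$ for some $c_j \in \C$. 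Expanding gives
\begin{equation}
(1+\xi z^j)^{\nu}(1+\eta z^j) = 1 + (\nu\xi + \eta)\, z^j + O(z^{j+1}),
\end{equation}
so the task at stage $j$ reduces to producing $\xi_j,\eta_j$ of modulus $1$ and $\nu(j) \in \Z_{\geq 0}$ with $\nu(j)\xi_j + \eta_j = c_j$. With such a choice, the quotient $f_{j+1}$ starts $1 + O(z^{j+1})$ and the induction continues.

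The main obstacle — really the only point of substance — is the elementary geometric claim that every $c \in \C$ can be written as $\nu \xi + \eta$ with $|\xi|=|\eta|=1$ and $\nu$ a nonnegative integer. For fixed $\nu \geq 1$, as $\xi$ and $\eta$ range over the unit circle, $\nu\xi + \eta$ fills exactly the closed annulus $\{z : \nu-1 \leq |z| \leq \nu+1\}$, namely the union over $\xi$ of unit circles centered on the circle of radius $\nu$. Thus it suffices to pick $\nu = \lceil |c|\rceil$ when $|c| \geq 1$ and $\nu = 1$ when $|c| < 1$; in either case $\nu - 1 \leq |c| \leq \nu+1$, and appropriate $\xi,\eta$ on the unit circle exist.

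Finally, I would observe that no convergence issue arises at the formal level: the $j$-th factor contributes only to coefficients of $z^j$ and higher, so the coefficient of $z^n$ in $\prod_{j=1}^\infty(1+\xi_j z^j)^{\nu(j)}(1+\eta_j z^j)$ is already pinned down by the partial product $\prod_{j=1}^{n}$, and by construction it equals $a_n$. Everything beyond the geometric claim is routine power-series bookkeeping.
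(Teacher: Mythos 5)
Your proposal is correct and follows essentially the same route as the paper: an inductive matching of Taylor coefficients, dividing out the partial product at each stage, with the key step being the observation (the paper's Lemma~\ref{lem:annulus}) that every $c\in\C$ can be written as $\nu\xi+\eta$ with $|\xi|=|\eta|=1$ and $\nu$ a positive integer, proved by the same annulus-covering argument. The paper likewise treats this proposition as a purely formal computation and defers all convergence questions to the later, differently organized proof of Theorem~\ref{thm:around0}.
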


\begin{proposition}
\label{prop:formalblaschke}
Suppose
\begin{equation}
f(z)=\sum_{n=0}^\infty a_n z^n
\end{equation}
is a formal power series with $a_n\in \C$ and $a_0 \not = 0$,
and let $R<1$.
We can write
\begin{equation}
f(z)=
a_0 \prod_{j=1}^\infty
\left(\frac{1+ \xi_j z^j}{1+  {\xi}_j R^j z^j}\right)^{\nu(j)}
\frac{1+\eta_j z^j}{1+\eta_j R^j z^j},
\end{equation}
where $|\xi_j|=|\eta_j|=1$ and $\nu(j)$ is a non-negative integer.
\end{proposition}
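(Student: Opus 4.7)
The plan is to prove Proposition \ref{prop:formalblaschke} by induction on the degree, choosing one factor at a time so that the $j$-th factor kills the coefficient of $z^j$ in the residual series without disturbing lower-order coefficients. After dividing by $a_0$, I would assume $a_0=1$, and inductively maintain a factorization
\begin{equation*}
f(z)=\prod_{i=1}^{j-1}\left(\frac{1+\xi_i z^i}{1+\xi_i R^i z^i}\right)^{\nu(i)}\frac{1+\eta_i z^i}{1+\eta_i R^i z^i}\cdot g_j(z),
\end{equation*}
where $g_j(z)=1+b_j z^j+b_{j+1}z^{j+1}+\cdots$ has vanishing coefficients at $z,\dots,z^{j-1}$. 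The key observation is that each proposed factor of index $j$ involves only the monomials $z^j,z^{2j},\dots$, so multiplying or dividing by it leaves the coefficients of $z,\dots,z^{j-1}$ untouched.

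The core computation is to determine $\xi_j,\eta_j,\nu(j)$. The coefficient of $z^j$ in a single factor $(1+\xi_j z^j)^{\nu(j)}(1+\xi_j R^j z^j)^{-\nu(j)}(1+\eta_j z^j)(1+\eta_j R^j z^j)^{-1}$ equals $(1-R^j)\bigl(\nu(j)\xi_j+\eta_j\bigr)$. Since $0\le R<1$, setting this equal to $b_j$ reduces the step to the following lemma: every $w\in\C$ can be written as $n\xi+\eta$ with $n$ a non-negative integer and $|\xi|=|\eta|=1$. This is the main (and essentially only) obstacle; it is a short geometric check. If $w=0$ take $n=1$, $\xi$ any unit vector, and $\eta=-\xi$. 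Otherwise, choose any non-negative integer $n$ in the interval $[\max(0,|w|-1),|w|+1]$ (nonempty of length $\ge 2$); then the circles $\{|z|=n\}$ and $\{|w-z|=1\}$ meet, so one can pick $\xi$ on the unit circle with $|w-n\xi|=1$ and set $\eta=w-n\xi$.

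With this lemma, the induction goes through: after choosing the $j$-th factor, division yields a new formal power series $g_{j+1}(z)=1+b_{j+1}'z^{j+1}+\cdots$ with vanishing coefficients at $z,\dots,z^j$, and the induction proceeds. Since the partial product $\prod_{i=1}^{N}$ agrees with $f$ in all coefficients up through $z^N$, the infinite product converges in the formal power series topology to $f$, completing the proof. (The same argument specialized to $R=0$ gives Proposition \ref{prop:formalproduct}.)
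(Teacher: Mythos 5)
Your proposal is correct and follows essentially the same route as the paper: an inductive coefficient-matching argument in which the $j$-th factor contributes $(1-R^j)\bigl(\nu(j)\xi_j+\eta_j\bigr)$ to the coefficient of $z^j$ while leaving lower coefficients untouched, reduced to the same geometric lemma about writing a complex number as $n\xi+\eta$ with $\xi,\eta$ on a circle (the paper applies its Lemma~\ref{lem:annulus} with radius $R_0=1-R^j$ rather than rescaling $b_j$ to radius $1$, which is the identical step). One microscopic slip: in your circle-intersection argument the choice $n=0$ only works when $|w|=1$, so you should select an integer $n\ge 1$ from your interval, which is always possible since the interval contains $|w|+1\ge 1$.
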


\begin{corollary}\label{cor:formalblaschke0} Suppose $g$ is analytic
in a neighborhood of
$0$ with $g(0)\not=0$, and suppose $0<r<1$.  Then for all $J>0$ there exists a
constant $c_J$ and a
Blaschke product $B_J$ having all zeros on $|z|=r$ 
such that
\begin{equation}
g(z)- c_J B_J(z) = O(z^J),
\end{equation}
as $z\to 0$.
\end{corollary}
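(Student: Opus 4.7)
The plan is to apply Proposition~\ref{prop:formalblaschke} to a rescaled version of $g$, with the rescaling parameter tuned so that the abstract factors of the proposition become genuine Blaschke factors with zeros on $|z|=r$.

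Define $h(w)=g(rw)$; then $h$ is analytic at $0$ with $h(0)=g(0)\ne 0$.  Apply Proposition~\ref{prop:formalblaschke} to $h$ with the specific choice $R=r^2$ (which is in $(0,1)$ since $0<r<1$).  This represents $h(w)$ as a formal product of factors of the forms $(1+\xi_j w^j)/(1+\xi_j r^{2j}w^j)$ and $(1+\eta_j w^j)/(1+\eta_j r^{2j}w^j)$, the former raised to multiplicities $\nu(j)$.  Truncate the product to indices $1\le j\le J-1$; because each omitted factor equals $1+O(w^j)$ with $j\ge J$, the truncated finite product $P_J(w)$ satisfies $h(w)-P_J(w)=O(w^J)$.

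Substitute $w=z/r$.  The identity $g(z)=h(z/r)$ yields $g(z)-P_J(z/r)=O(z^J)$, and a generic factor of $P_J(z/r)$ becomes
\begin{equation*}
\frac{1+\xi_j z^j/r^j}{1+\xi_j r^j z^j},
\end{equation*}
whose numerator vanishes on the circle $|z|=r$ and whose denominator vanishes on $|z|=1/r$.  Because these circles are mutual reflections under $z\mapsto 1/\overline{z}$, the identity $\prod_{k=0}^{j-1}(z-\omega\zeta^k)=z^j-\omega^j$ (with $\zeta=e^{2\pi i/j}$ and $\omega$ a $j$-th root of $-1/\xi_j$) lets us split such a factor into $j$ standard Blaschke factors $(z-a_k)/(1-\overline{a_k}z)$ with $|a_k|=r$, up to a nonzero scalar.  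The multiplicity $\nu(j)$ just repeats these $j$ zeros.

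Absorbing all the scalar prefactors into a single constant $c_J$ writes $P_J(z/r)=c_J B_J(z)$ with $B_J$ a finite Blaschke product whose zeros all lie on $|z|=r$, completing the proof.  The key step (and the main obstacle) is recognizing that $R=r^2$ is the unique choice placing the pole circle of each rescaled formal factor at $|z|=1/r$, which is exactly what forces the Blaschke geometry; once that is in place, the remainder of the argument is scalar bookkeeping.
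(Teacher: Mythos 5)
Your proposal is correct and follows essentially the same route as the paper: the paper's proof (given for Corollary~\ref{cor:blaschke0} and declared identical for this corollary) likewise takes $R=r^2$, rescales by $z\mapsto z/r$, and rewrites each factor as a unimodular-times-$r^{-2j}$ scalar multiple of $\bigl(\alpha_j+w^j\bigr)/\bigl(1+\overline{\alpha_j}w^j\bigr)$ with $|\alpha_j|=r^j$, i.e.\ a M\"obius map composed with $w^j$. The only nit is a typo: your $\omega$ should be a $j$-th root of $-r^j/\xi_j=-r^j\overline{\xi_j}$ (not of $-1/\xi_j$), consistent with your stated conclusion $|a_k|=r$.
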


The proof of Corollary~\ref{cor:formalblaschke0} is the same as the
proof of Corollary~\ref{cor:blaschke0}. The proof of latter will be
presented in Section~\ref{proofoftheoremandcorollary}, following the
proof of Theorem~\ref{thm:denominator0}.

We define the product representations inductively, using the following
Lemma.

\begin{lemma}\label{lem:annulus} Let $R_0>0$.
Every $w\in \C$ can be written as
$w=m \xi + \eta$ where $|\xi|=|\eta|=R_0$ and $m$ is a positive
integer.  There are four such representations if $|w|> R_0$,
two representations if $0<|w|\le R_0$,
and infinitely many if $w=0$.
\end{lemma}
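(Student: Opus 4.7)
The approach is entirely geometric. Fix a positive integer $m$ and set $u = m\xi$; then the condition $|\xi| = R_0$ becomes $u \in C_m$, the circle of radius $mR_0$ about the origin, while $|\eta| = |w - u| = R_0$ forces $u$ to lie also on $C'$, the circle of radius $R_0$ about $w$. Each intersection point $u \in C_m \cap C'$ furnishes a unique representation $w = m(u/m) + (w-u)$, and distinct $u$ (or distinct values of $m$) give distinct representations. So I would reduce the lemma to counting $|C_m \cap C'|$ as $m$ ranges over the positive integers.

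I would then invoke the elementary fact from plane geometry: two circles with centers at distance $d$ apart and radii $r_1, r_2$ meet in two, one, or zero points according as $|r_1 - r_2| < d < r_1 + r_2$ holds strictly, with equality at one end, or not at all; they coincide exactly when $d = 0$ and $r_1 = r_2$. Taking $r_1 = mR_0$, $r_2 = R_0$, and $d = |w|$, two transversal intersections occur precisely when $(m-1)R_0 < |w| < (m+1)R_0$, while $C_m = C'$ only when $w = 0$ and $m = 1$.

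The proof then concludes with a short case analysis on $|w|$. If $w = 0$, only $m = 1$ yields anything, and $C_1 = C'$ provides infinitely many choices of $\xi \in C_1$ with $\eta = -\xi$. If $0 < |w| \le R_0$, the strict inequality isolates $m = 1$, producing two transversal solutions. If $|w| > R_0$, then exactly two (consecutive) positive integers $m$ satisfy the strict inequality, each contributing two intersections, for a total of four representations. The only real obstacle is careful bookkeeping at the boundary values of $|w|$, where tangencies (one-point intersections) replace transversal pairs; these fit into the same framework and give the tally claimed in the lemma.
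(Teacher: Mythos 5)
Your argument is correct and is essentially the paper's own: intersecting the circle $|u|=mR_0$ with the circle $|u-w|=R_0$ for each positive integer $m$ is exactly the paper's observation that the circles of radius $R_0$ centered at $m\xi$ sweep out annuli of width $2R_0$ that cover the plane, with generic points lying in the interior of two of them. The only looseness is in the boundary bookkeeping (e.g.\ at $|w|=R_0$ an internal tangency from $m=2$ actually contributes a third representation, so the stated tally is off there), but the paper's proof waves at the same exceptional set and only the existence of a representation is used later.
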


\begin{proof}
We need only consider the case $R_0=1$.

Consider the unit circle centered at $m\, e^{i \theta}$.
As $\theta$ goes from $0$ to $2\pi$, that circle sweeps out
an annulus whose inner and outer radii differ by~2.
Every point in the interior of that annulus has two representations
of the form $m\, e^{i \theta} + \eta$.  As $m$ varies, those
annuli cover the complex plane. Every point lies in 
the interior of two
of the annuli, except the points $|w|<R_0$ or whose absolute value
is an integer.
\end{proof}

\begin{proof}[Proof of Proposition \ref{prop:formalproduct}]
We may assume $a_0=1$. By Lemma~\ref{lem:annulus} with $R_0=1$,
we can choose
$\xi_1$, $\eta_1$, and $\nu(1)$ so that
$a_1=\nu(1) \xi_1 + \eta_1$. 

Set
\begin{equation}
P_1(z)=(1+ \xi_1 z)^{\nu(1)}
(1+\eta_1 z)
\end{equation}
and note that $P_1(z)=1+a_1 z + O(z^2)$.  Therefore,
\begin{equation}
\frac{f(z)}{P_1(z)} = \sum_{j=0}^\infty b_j z^j,
\end{equation}
where $b_0=1$ and $b_1=0$. 
Now choose $\xi_2$, $\eta_2$, and $\nu(2)$ so that
$b_2=\nu(2) \xi_2 + \eta_2$.  Setting
\begin{align}
P_2(z)=\mathstrut &(1+ \xi_2 z^2)^{\nu(2)} (1+\eta_2 z^2) \cr
=\mathstrut & 1+ b_2 z^2 + O(z^3)
\end{align}
we have
\begin{equation}
P_1(z)P_2(z) = 1+a_1 z + a_2 z^2 + O(z^3) .
\end{equation}
Proceeding inductively we obtain $f(z) = \prod_j P_j(z)$.
\end{proof}

\begin{proof}[Proof of Proposition \ref{prop:formalblaschke}]
Note that
\begin{equation}
\frac{1+ \xi_j z^j}{1+  {\xi}_j R^j z^j}
= 1+(1-R^j)\xi_j z^j + O(z^{2 j}) .
\end{equation}
So everything goes through in the previous proof, with the
modification that we apply Lemma~\ref{lem:annulus} with $R_0=1-R^j$
\end{proof}

The proofs given above are just formal calculations, and it is not
clear what convergence properties the infinite products might have.
Even if $f$ represents an analytic function in a neighborhood
of the origin, the products can only converge where $f$ does
not vanish.  The convergence of the products will depend on
the growth of the numbers $\nu(j)$, and the above constructions
do not appear to shed light on this.

In the next section we organize the proof in a different way that
may appear more cumbersome, but it gives information
about the analytic properties of the infinite product.

\subsection{Proof of Theorem~\ref{thm:around0}}\label{sec:proof0}

\begin{proof}[Proof of Theorem~\ref{thm:around0}]
Let
\begin{equation}
\frac{f'}{f}(z)=\sum_{n=0}^\infty a_n z^n .
\end{equation}
Since $f'/f$ is analytic in a disc slightly larger than
$|z|<r$, there is a $C>0$ and $\kappa>1/r$ so that
\begin{equation}
|a_n| < C \kappa^n
\end{equation}
for all~$n$.  If $r \ge 1$ we set $\kappa=1+\delta$ for some $\delta>0$.
In particular, $\kappa>1$.

Let
\begin{equation}\label{eqn:g}
g(z)=g_J(z)=\prod_{j=1}^J (1+\xi_j z^j)^{\nu(j)}
(1+\eta_j z^j),
\end{equation}
where $|\xi_j|=|\eta_j|=1$ and $\nu(j)$ is a non-negative integer.
We will choose those parameters so that the first $J$ terms in the
Taylor series for $g'/g$ match those of~$f'/f$.

We have
\begin{align}\label{eqn:gpg}
\frac{g'}{g}(z) 
=\mathstrut & 
\sum_{1\le j \le J}\left( j \nu(j) \xi_j z^{j-1} \frac{1}{1+\xi_j z^j}
+j \eta_j z^{j-1} \frac{1}{1+\eta_j z^j}\right) \cr
=\mathstrut & 
- z^{-1} \sum_{1\le j \le J} j \sum_{m=0}^\infty \biggl(  \nu(j)
(-1)^{m+1} \xi_j^{m+1} z^{j(m+1)} 
+ (-1)^{m+1} \eta_j^{m+1} z^{j(m+1)}\biggr) \cr
=\mathstrut &
- \sum_{k=0}^\infty  z^k
 \sum_{\substack{1\le j \le J\\ j|(k+1)}}
(-1)^{\frac{k+1}{j}} j
	\left( \nu(j) \xi_j^{\frac{k+1}{j}} + \eta_j^{\frac{k+1}{j}} \right)
\\
=\mathstrut & \sum_{k=0}^\infty b_k z^k, \nonumber
\end{align}
say.  On the third line we changed summation index $k=j(m+1)-1$, and
the notation $n|m$, read ``$n$ divides $m$,'' means that $m/n$ is an
integer.

Now we show how to choose the parameters in $g$ to match the Taylor
series coefficients of the logarithmic derivatives.

By Lemma~\ref{lem:annulus} we can choose a non-negative
integer $\nu(1)$
and complex numbers  $|\xi_1|= |\eta_1|=1$ so that
$-a_0=\nu(1) \xi_1 + \eta_1$.
Thus, $b_0=a_0$, and we have matched the first Taylor series
coefficients of $f'/f$ and $g'/g$.

Since the only positive integer that divides $0+1=1$ is $1$,
we will continue to have $b_0=a_0$
no matter what we later choose for $\xi_j$, $\eta_j$, and $\nu(j)$ for
$j\ge 2$.  Likewise, once we have chosen
$\xi_j$, $\eta_j$, and $\nu(j)$ for $j\le K$ so that
$a_j=b_j$ for $j\le K-1$, we will continue to have
$a_j=b_j$ for $j\le K-1$ because if $j|(k+1)$ then $j\le k+1$.

To choose $\xi_j$, $\eta_j$, and $\nu(j)$ for $j\ge 2$, we
have to deal with the fact that the
$j=1$ terms make a contribution to all of those coefficients. Similarly,
the $j=2$ terms contribute to all of the later even-index coefficients,
and so on.  Breaking the sum defining $b_K$ into two parts, we find
\begin{align} \label{eqn:Kplus1} 
b_K=\mathstrut &-\sum_{\substack{1\le j \le K\\ j|(K+1)}}
(-1)^{\frac{K+1}{j}} j
        \left( \nu(j) \xi_j^{\frac{K+1}{j}} + \eta_j^{\frac{K+1}{j}} \right)
-
\sum_{\substack{K+1\le j \le J\\ j|(K+1)}}
(-1)^{\frac{K+1}{j}} j
        \left( \nu(j) \xi_j^{\frac{K+1}{j}} + \eta_j^{\frac{K+1}{j}} \right) \cr
=\mathstrut &
-\sum_{\substack{1\le j \le K\\ j|(K+1)}}
(-1)^{\frac{K+1}{j}} j
        \left( \nu(j) \xi_j^{\frac{K+1}{j}} + \eta_j^{\frac{K+1}{j}} \right)
+
(K+1)
        \Bigl( \nu(K+1) \xi_{K+1} + \eta_{K+1} \Bigr) .
\end{align}
The terms in the sum on the second line of \eqref{eqn:Kplus1} have already been chosen,
so we can use
Lemma~\ref{lem:annulus} to choose $\nu(K+1)$, $\xi_{K+1}$, and $\eta_{K+1}$
so that $b_K=a_K$.

Proceeding in this way we match the first $J$ coefficients of
the logarithmic derivatives.

It remains to bound $\nu(j)$ so that we can bound the tail of~\eqref{eqn:gpg}.

By~\eqref{eqn:Kplus1} and the fact that $b_K=a_K$ we have
\begin{equation}\label{eqn:nukinequality}
(K+1)\nu(K+1) \le  K+1+ |a_K| +
\sum_{\genfrac{}{}{0pt}{}{j|(K+1)}{1\le j\le K}}
\bigl(j \nu(j) + j\bigr) .
\end{equation}

By Lemma~\ref{lem:boundnu}, given at the end of this section, 
the above estimate implies that there exists 
$C'$ so that $n\nu(n) \le C' \kappa^n$ for all $n\ge 1$.
This is sufficient to estimate the tail
for $|z|< 1/\kappa$ because
the coefficient of $z^n$ in \eqref{eqn:gpg} is bounded by
\begin{equation}
|b_n|\le \sum_{j\le J} \bigl( j \nu(j) + j \bigr) \ll J^2 \kappa^J ,
\end{equation}
where we use $\ll$ to mean the quantity is bounded by a constant times $J^2 \kappa^J$.
So
\begin{equation}
\sum_{n\ge J+1} |b_n| |z^n| \ll J^2 \kappa^J \sum_{n\ge J+1} |z|^n
\ll \frac{J^2}{1-|z|} \kappa^J |z|^J,
\end{equation}
which goes to 0 as $J\to\infty$ because $|z|<1/\kappa < 1$.

This shows that $g'(z)/g(z)$ is close to $f'(z)/f(z)$ for
$|z|<1/\kappa$.  We can antidifferentiate
using Cauchy's theorem, so $\log(f)$ is close to $\log(g_J)+c_J$
for $|z|<1/\kappa$, for
some constant~$c_J$.  Now exponentiate 
to get that $f(z)$ is close to $e^{c_J} g(z)$.
Since $g(0)=1$, choose $c_J=\log(f(0))$.

This completes the proof of Theorem~\ref{thm:around0}.
\end{proof}

\begin{lemma}\label{lem:boundnu}
Suppose $\nu(j)\ge 0$ for $j\ge 0$.
If there exist $\kappa>1$ and $C>0$ so that
\begin{equation}
(n+1) \nu(n+1) \le n^2 + C \kappa^n +
 \sum_{\genfrac{}{}{0pt}{}{j|(n+1)}{1\le j\le n}} j \nu(j)
\end{equation}
for all $n\ge 1$,
then there exists $C'$ so that $n\, \nu(n) \le C' \kappa^n$
for all $n\ge 1$.
\end{lemma}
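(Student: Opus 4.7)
The plan is to prove the bound by strong induction on $n$, exploiting the fact that the divisor sum on the right-hand side of the hypothesis involves only proper divisors of $n+1$, each of which is at most $(n+1)/2$. Thus the inductive contribution will be of order $\kappa^{(n+1)/2}$, which is negligible compared to the target $\kappa^{n+1}$ because $\kappa>1$.

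Set $u(j) = j\,\nu(j)$ and $m=n+1$, so the hypothesis becomes
\begin{equation}
u(m) \le (m-1)^2 + C\kappa^{m-1} + \sum_{\substack{d\mid m\\ d<m}} u(d) .
\end{equation}
I aim to show $u(m) \le C'\kappa^m$ for all $m\ge 1$, for a suitable $C'$ to be chosen. Every proper divisor of $m$ satisfies $d \le m/2$, so if $u(d)\le C'\kappa^d$ holds for all $d<m$, then
\begin{equation}
\sum_{\substack{d\mid m\\ d<m}} u(d) \le C' \sum_{\substack{d\mid m\\ d<m}} \kappa^d \le C'\,\tau(m)\,\kappa^{m/2},
\end{equation}
where $\tau(m)$ is the number of divisors of $m$. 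The inductive estimate then reads
\begin{equation}
u(m) \le (m-1)^2 + C\kappa^{m-1} + C'\,\tau(m)\,\kappa^{m/2}.
\end{equation}

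To close the induction I need $C'\kappa^m$ to dominate the right-hand side, which rearranges to
\begin{equation}
C'\bigl(1 - \tau(m)\kappa^{-m/2}\bigr) \ge \frac{(m-1)^2}{\kappa^m} + \frac{C}{\kappa}.
\end{equation}
Since $\kappa>1$, the crude bound $\tau(m)\le m$ gives $\tau(m)\kappa^{-m/2}\to 0$, and $(m-1)^2/\kappa^m\to 0$ as well. So the inequality holds whenever $C' > C/\kappa$ and $m$ is sufficiently large, say $m\ge N$ for some $N=N(\kappa,C,C')$.

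The final step is simply to absorb the finitely many base cases by enlarging $C'$. I would take
\begin{equation}
C' \;=\; \max\!\left\{\tfrac{C}{\kappa}+1,\ \max_{1\le m<N}\frac{u(m)}{\kappa^m}\right\},
\end{equation}
which makes the bound $u(m)\le C'\kappa^m$ hold trivially for $m<N$ and by the argument above for $m\ge N$. The only potential obstacle is the divisor sum, but since every proper divisor is at most $m/2$ and $\tau(m)$ grows subexponentially, the inductive contribution is always of smaller exponential order than the target, so the argument goes through with no further work.
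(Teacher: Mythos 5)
Your proof is correct and follows essentially the same route as the paper's: strong induction using the fact that every proper divisor of $m$ is at most $m/2$, so the divisor sum contributes only $O(\kappa^{m/2})$, which is swamped by the target $C'\kappa^m$, with the finitely many base cases absorbed by enlarging $C'$. The one slip is the circular order of quantifiers ($N$ is allowed to depend on $C'$ while $C'$ is then defined via $N$); as in the paper, this is repaired by first choosing $N$ depending only on $\kappa$ so that, say, $\tau(m)\kappa^{-m/2}\le \tfrac12$ and $(m-1)^2\le \kappa^m$ for $m\ge N$, after which the required lower bound on $C'$ is uniform in $m$.
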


\begin{proof} First choose $N$ so that
\begin{equation}
\frac{n}{\kappa^{(n+1)/2}} < \frac{2}{3} 
\end{equation}
if $n\ge N$.  Then choose $C'$ so that
\begin{enumerate}
\item\label{item:1} $C'>3 C$,
\item\label{item:2} $n^2 < \frac13 C' \kappa^n$ for all $n$, and
\item\label{item:3} $n\, \nu(n) < C' \kappa^n$ for $n\le N$.
\end{enumerate}
Note that (\ref{item:2}) uses only the fact that $\kappa>1$,
and (\ref{item:3}) uses only that $\kappa>0$ and $N$ is finite.

Now we prove the desired estimate by induction.
Suppose $n\, \nu(n) \le C' \kappa^n$ for $n\le M$, where $M > N$,
and suppose $n = M + 1$.
Using the first two conditions on $C'$,
the induction hypothesis, and the fact that all
proper divisors of $n+1$ are at most $(n+1)/2$, we have
\begin{align}
(n+1)\nu(n+1) < \mathstrut & \frac{1}{3} C' \kappa^n +
	\frac{1}{3} C' \kappa^n +
	\sum_{j\le (n+1)/2} j\nu(j) \cr
\le \mathstrut & \frac{2}{3} C' \kappa^n +
        \sum_{j\le (n+1)/2}  C' \kappa^j \cr
\le \mathstrut & \frac{2}{3} C' \kappa^n +
        \frac{n+1}{2} C' \kappa^{(n+1)/2} \cr
=\mathstrut & \frac{2}{3} C' \kappa^n +
        C' \kappa^{n+1} \frac{n+1}{2 \kappa^{(n+1)/2} } \cr
\le \mathstrut & C' \kappa^{n+1} .
\end{align}
The last inequality follows from $n>N$ and the choice of~$N$.
That completes the proof of Lemma~\ref{lem:boundnu}.
\end{proof}

\subsection{Proof of Theorem~\ref{thm:denominator0}}
\label{proofoftheoremandcorollary}

We describe how to modify the the proof of Theorem~\ref{thm:around0}
to give a proof of  Theorem~\ref{thm:denominator0}.

\begin{proof}[Proof of Theorem~\ref{thm:denominator0}] Let 
\begin{equation}
h(z)=
\prod_{j=1}^J \left(\frac{1+ \xi_j z^j}{1+ \xi_j R^j z^j}\right)^{\nu(j)}
\frac{1+\eta_j z^j}{1+\eta_j R^j z^j}.
\end{equation}
Note that
\begin{equation}
\frac{h'}{h}(z) = \frac{g'}{g}(z) - R \frac{g'}{g}(R z),
\end{equation}
where $g$ is the function 
\eqref{eqn:g} appearing in the proof of Theorem~\ref{thm:around0}.
Writing
\begin{equation}
\frac{h'}{h}(z)  = \sum_{k=0}^\infty c_k z^k
\end{equation}
we have
\begin{equation}
c_k = (1-R^{k+1}) b_k,
\end{equation}
where $b_k$ are the Taylor series coefficients of $g'/g$ given in~\eqref{eqn:gpg}.

Thus, when matching the coefficients of $h'/h$ and $f'/f$, everything goes
as before if in each equation we replace $b_k$ by $c_k$ and 
$a_k$ by $a_k/(1-R^{k+1})$.  So the choices of $\xi_k$, $\eta_k$, and $\nu(k)$
follow the same steps.  The final step of bounding $\nu(K)$ involves
replacing inequality~\eqref{eqn:nukinequality} by
\begin{equation}
(K+1)\nu(K+1) \le  K+1+ \frac{|a_K|}{1-R^{K+1}} 
+ \sum_{\genfrac{}{}{0pt}{}{j|(K+1)}{1\le j\le K}}
j \nu(j) + j .
\end{equation}
But that implies the bound we need on $\nu(j)$ because the only fact we used
about $a_k$ is $|a_k|\le C \kappa^k$ for some~$C>0$.

This completes the proof of Theorem~\ref{thm:denominator0}. \end{proof}

We turn to the proof of 
Corollary~\ref{cor:blaschke0}.

\begin{proof}[Proof of Corollary~\ref{cor:blaschke0}]
In Theorem~\ref{thm:denominator0}, let $R=r^2$ and set $g(z)=f(z/r)$, so
\begin{equation*}
g(r z) = f(z)=
a_0 \prod_{j=1}^\infty
\left(\frac{1+ \xi_j z^j}{1+  {\xi}_j r^{2j} z^j}\right)^{\nu(j)}
\frac{1+\eta_j z^j}{1+\eta_j r^{2j} z^j}.
\end{equation*}
By Theorem 2.2, for each $\delta > 0$ we may choose $N$ so that
\begin{equation*}
\sup_{|z| < 1 -\delta} \left|g(r z) -
a_0 \prod_{j=1}^N
\left(\frac{1+ \xi_j z^j}{1+  {\xi}_j r^{2j} z^j}\right)^{\nu(j)}
\frac{1+\eta_j z^j}{1+\eta_j r^{2j} z^j}\right| < \varepsilon.
\end{equation*}

It remains to rearrange the above product to recognize it as
a Blaschke product.
Letting $w = r z$ we have
\begin{equation*}
\sup_{|w| < r(1 -\delta)} \left|g(w) -
a_0 \prod_{j=1}^N
\left(\frac{1+ r^{-j} \xi_j w^j}{1+  {\xi}_j {r}^j w^j}\right)^{\nu(j)}
\frac{1+r^{-j} \eta_j w^j}{1+\eta_j {r^j} w^j}\right| < \varepsilon.
\end{equation*}

Thus, \begin{equation*}
\sup_{|w| < r(1 -\delta)} \left|g(w) -
a_0 \prod_{j=1}^N
r^{-2j} \xi_j  \eta_j \left(\frac{r^{j} \overline{\xi_j}+ w^j}{1+  {\xi}_j {r}^j w^j}\right)^{\nu(j)}
\frac{r^j \overline{\eta_j}+ w^j}{1+\eta_j {r^j} w^j}\right| < \varepsilon.
\end{equation*}

Letting $\alpha_j = r^j \overline{\xi_j}$ and
$\beta_j =  r^j \overline{\eta_j}$,  we have 
\begin{equation}
\sup_{|w| < r(1 - \delta)}
 \left|g(w) -  \left(a_0 \prod_{j = 1}^N r^{-2j} \xi_j \eta_j\right)
     C(w)\right| < \varepsilon ,
\end{equation}
where $\displaystyle C(w) = \prod_{j = 1}^N \left(\frac{\alpha_j +
w^j}{1 + \overline{\alpha_j} w^j}\right)^{\nu(j)}\left(\frac{\beta_j +
w^j}{1 + \overline{\beta_j} w^j}\right)$. Since each factor of $C$ is a
M\"obius transformation composed with $w^j$, each factor is a Blaschke
product and therefore $C$ is a Blaschke product as well.
\end{proof}

\section{Corollaries of Theorem~\ref{thm:around0}}\label{sec:corollaries}

We deduce some corollaries.

\subsection{Approximation on pseudohyperbolic discs}\label{sec:pseudohyperbolic}

As a corollary to Theorem~\ref{thm:around0} we prove a result
about approximating on other discs contained in the unit disc. We
recall first that the pseudohyperbolic distance between two points $z$
and $w$ in $\mathbb{D}$ is defined to be the distance 
\begin{equation}
\rho(z, w) =
\left|\frac{z - w}{1 - \overline{w} z}\right|.
\end{equation}
For $a \in \mathbb{D}$
and $r$ with $0 < r < 1$ we let $D_\rho(a, r) = \{z: \rho(a, z) <
r\}$. Given a Euclidean disc $D(a_0, r_0)$, we may rotate it  so that the
center lies on the positive real axis, and let $x$ and $y$, with $|x|
< y$,  denote the points in which the bounding circle $\mathcal{C}$
intersects the real line.  Let $\alpha_a$ be the M\"obius function
\begin{equation}
\alpha_a(z) = \frac{z + a}{1 + \overline{a} z}
\end{equation}
and let 
$R = \frac{1 + xy}{x + y}$.
Then $R > 1$ and if $a = R - \sqrt{R^2 - 1}$ then $r = -\alpha_a^{-1}(x) =
\alpha_a^{-1}(y)$. Since $a$ is real, $\alpha_a^{-1}$ maps $\mathcal{C}$
onto a circle $\mathcal{C}_1$ passing through $r$ and $-r$ and since
the real line is orthogonal to $\mathcal{C}$, the real line must
be orthogonal to $\mathcal{C}_1$. Therefore $\alpha_a^{-1}$ maps
$\mathcal{C}$ onto $\{z: |z| = r\}$. Thus,  the disc $D(a_0,r_0)$ is
rotation of a pseudohyperbolic disc $D_\rho(a, r)$
for some $a$,~$r$.  This means that
\begin{equation}
D_\rho(a, r) = \alpha_a(D(0,r))
\label{eqn:mobiusdisc}
\end{equation}

For basic information about automorphisms of the disc, see Garnett~\cite{G}.

\begin{theorem}
Let $f$ be a function that is analytic and nonvanishing in a
neighborhood of the
disc $\overline{D(a_0, r_0)}\subset \D$. Then $f$ can be uniformly
approximated on $D(a_0, r_0)$ by a polynomial with zeros on the unit
circle.
\end{theorem}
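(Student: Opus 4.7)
The plan is to reduce to the centered case of Theorem~\ref{thm:around0} via a M\"obius change of variables, which a priori yields only a rational approximation, and then to erase the resulting denominator by invoking Theorem~\ref{thm:around0} a second time.

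After composing $f$ with a rotation of the variable (this preserves the class of polynomials with zeros on the unit circle), I may assume $a_0$ lies on the positive real axis. Then the discussion leading to~\eqref{eqn:mobiusdisc} furnishes $a\in(0,1)$ and $r\in(0,1)$ with $D(a_0, r_0) = \alpha_a(D(0,r))$. Set $F(z) = f(\alpha_a(z))$; this is analytic and nonvanishing on a neighborhood of $\overline{D(0,r)}$, so Theorem~\ref{thm:around0} produces a polynomial $P$ of some degree $N$, all of whose zeros lie on $|z|=1$, satisfying $|F(z) - P(z)| < \varepsilon/2$ uniformly on $\overline{D(0,r)}$.

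Substituting $z = \alpha_a^{-1}(w) = (w-a)/(1-aw)$ gives $|f(w) - P(\alpha_a^{-1}(w))| < \varepsilon/2$ on $\overline{D(a_0, r_0)}$, and clearing denominators shows
\begin{equation*}
P(\alpha_a^{-1}(w)) = \frac{Q(w)}{(1-aw)^N},
\end{equation*}
where $Q$ is a polynomial of degree $N$ whose zeros are exactly the images $\alpha_a(\sigma_j)$ of the zeros $\sigma_j$ of $P$. Because $\alpha_a$ maps the unit circle to itself, every zero of $Q$ lies on $|w|=1$; but so far this is only a rational approximation to $f$.

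To promote it to a polynomial approximation, I would apply Theorem~\ref{thm:around0} a second time, to the function $h(w) = 1/(1-aw)^N$. This $h$ is analytic and nonvanishing on $|w|\le \rho$ for every $\rho < 1/a$, hence on a neighborhood of $\overline{\D}$, so Theorem~\ref{thm:around0} produces a polynomial $S$ with all zeros on $|w|=1$ and $|h(w) - S(w)| < \varepsilon/(2\|Q\|_\infty)$ uniformly on $\overline{D(a_0, r_0)}$; here uniform convergence on that particular set uses that $\overline{D(a_0, r_0)}$ sits strictly inside $\D$ and so lies in some $\{|w|\le 1-\delta\}$, absorbing the $\delta$-loss in the theorem. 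A triangle inequality then yields $|f(w) - Q(w)S(w)| < \varepsilon$ on $\overline{D(a_0, r_0)}$, and the product $QS$ is a polynomial whose zeros all lie on the unit circle. The single creative step, and the main potential obstacle, is recognizing that the extraneous rational factor $(1-aw)^{-N}$ generated by the M\"obius pullback can itself be absorbed by a second application of the same theorem, since its reciprocal is trivially nonvanishing on a neighborhood of $\overline{\D}$.
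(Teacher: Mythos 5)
Your proof is correct and follows essentially the same route as the paper: reduce to the centered case via the M\"obius map $\alpha_a$, observe that the pullback of the approximating polynomial is a rational function with zeros on the unit circle, and then apply Theorem~\ref{thm:around0} a second time to dispose of the rational denominator. The only (cosmetic) difference is that the paper re-approximates the entire rational function $p\circ\alpha_a^{-1}$ on a slightly larger disc $D(0,s)$ with $s<1$, whereas you factor it as $Q(w)(1-aw)^{-N}$ and approximate only the nonvanishing factor $(1-aw)^{-N}$, multiplying back by the numerator polynomial~$Q$.
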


\begin{proof}

Suppose $f$ has no zeros in a neighborhood of the closure of a
pseudohyperbolic disc
$D_\rho(a, r)$. Then $f \circ {\alpha_a}$ has no zeros
in a neighborhood of the disc $D(0, r)$.
By Theorem~\ref{thm:around0},
there is a
polynomial $p$ with zeros on the unit circle such that
\begin{equation}
\|f \circ {\alpha_a} - p\|_{D(0, r)} < \varepsilon.
\end{equation}
Therefore by \eqref{eqn:mobiusdisc} and a change of variables,
\begin{equation}
\|f - p \circ {\alpha_a}^{-1}\|_{D_\rho(a,
r)} < \varepsilon.
\label{eqn:pestimate}
\end{equation}

Now, letting $z_1, \ldots, z_N$ denote the zeros of $p$ on the unit circle
we see that 
\begin{equation}
p \circ \alpha_a^{-1} (z) = \prod_{j = 1}^N ({\alpha_a}^{-1}(z)
- z_j).
\end{equation}
This is a rational function with poles outside the (closed)
unit disc and zeros at ${\alpha_a}(z_j)$ for $j = 1, \ldots, N$. Thus,
the zeros of this rational function also lie on the unit circle.

Now
choose
$s < 1$ so that $D_\rho(a, r) \subset D(0, s)$.
Since $p \circ \alpha_a^{-1} (z)$ is analytic and nonvanishing in
a neighborhood of $D(0, s)$, we can apply Theorem~\ref{thm:around0}
again
to get a polynomial $q$ so that
\begin{equation}
\|q - p \circ {\alpha_a}^{-1}\|_{D(0, s)}
< \varepsilon,
\end{equation}
which implies 
\begin{equation}
\|q - p \circ {\alpha_a}^{-1}\|_{D_\rho(a, r)}
< \varepsilon.
\label{eqn:qestimate}
\end{equation}
Combining \eqref{eqn:pestimate} and \eqref{eqn:qestimate}
gives $\|f - q\|_{D_\rho(a, r)} < 2\varepsilon$,
as required.

\end{proof}

There is a Blaschke product version of this result that can be
obtained in a similar, but simpler manner; that is, if we use
Corollary~\ref{cor:blaschke0} in place of Theorem~\ref{thm:around0}
and note that $B \circ \alpha_a^{-1}$ is a finite Blaschke product (see
\cite[p. 6]{G}) with zeros on the boundary of $D_\rho(a, r)$ whenever
the zeros of $B$ lie on $\{z: |z| = r\}$, we obtain the following result.

\begin{theorem} Let $f$ be a function that is analytic and nonvanishing
in a neighborhood of 
the disc     $\overline{D(a_0, r_0)} \subset \mathbb{D}$. Then
$f$ can be uniformly approximated on $D(a_0, r_0-\delta)$ by a 
constant times a Blaschke product
with zeros on the circle $\{z: |z - a_0| = r_0\}$.
\end{theorem}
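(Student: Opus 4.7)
My plan is to follow the pattern of the preceding polynomial theorem but to use Corollary~\ref{cor:blaschke0} in place of Theorem~\ref{thm:around0}; the argument then becomes strictly simpler, because finite Blaschke products are closed under composition with disc automorphisms and no second approximation step is required. First, by applying a unimodular rotation to the variable (which preserves both $\D$ and the class of finite Blaschke products), I may assume the center $a_0$ lies on the nonnegative real axis. Then, as recalled at the start of Section~\ref{sec:pseudohyperbolic}, there exist $a\in[0,1)$ and $r\in(0,1)$ with $D(a_0,r_0)=D_\rho(a,r)=\alpha_a(D(0,r))$; the same identification holds on a slightly enlarged disc, so $g(z):=f(\alpha_a(z))$ is analytic and nonvanishing on a neighborhood of $\{|z|\le r\}$.

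Next I apply Corollary~\ref{cor:blaschke0} to $g$: for any prescribed $\ve>0$ and any $\delta'>0$ it produces a constant $c_B\in\C$ and a finite Blaschke product $B$ with all zeros on $\{|z|=r\}$ satisfying $|g(z)-c_B B(z)|<\ve$ for $|z|<r-\delta'$. Substituting $w=\alpha_a(z)$ yields $|f(w)-c_B(B\circ\alpha_a^{-1})(w)|<\ve$ for $w\in\alpha_a(\{|z|<r-\delta'\})$. Since $B$ is a finite Blaschke product and $\alpha_a^{-1}$ is an automorphism of $\D$, the composition $B\circ\alpha_a^{-1}$ is again a finite Blaschke product (see \cite[p.~6]{G}); its zeros are the $\alpha_a$-images of the zeros of $B$, so they lie on $\alpha_a(\{|z|=r\})=\{|w-a_0|=r_0\}$ (after undoing the initial rotation).

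It remains only to convert the pseudohyperbolic shrinkage $\delta'$ into the prescribed Euclidean shrinkage $\delta$. Because $\alpha_a$ is a homeomorphism of $\overline\D$ that carries $\{|z|=r\}$ onto $\{|w-a_0|=r_0\}$, and $\overline{D(a_0,r_0-\delta)}$ is a compact subset of $D(a_0,r_0)=\alpha_a(D(0,r))$, a routine continuity/compactness argument lets me choose $\delta'>0$ small enough that $D(a_0,r_0-\delta)\subset\alpha_a(\{|z|<r-\delta'\})$; with this choice the estimate above holds on the required disc, completing the proof. I expect this parameter-translation step to be the only genuinely new content beyond direct invocation of Corollary~\ref{cor:blaschke0}, and it is routine; in particular there is no obstacle analogous to the polynomial case, where a second approximation was needed to replace the rational function $p\circ\alpha_a^{-1}$ by a polynomial.
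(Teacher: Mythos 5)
Your proposal is correct and follows exactly the route the paper itself indicates (the paper only sketches this proof in the remark preceding the theorem): apply Corollary~\ref{cor:blaschke0} to $f\circ\alpha_a$ and use that $B\circ\alpha_a^{-1}$ is again a finite Blaschke product with zeros on $\alpha_a(\{|z|=r\})=\{|w-a_0|=r_0\}$, with no second approximation step needed. Your additional compactness argument translating the pseudohyperbolic shrinkage $\delta'$ into the Euclidean shrinkage $\delta$ is a routine detail the paper leaves implicit, and it is handled correctly.
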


From these results, we obtain a corollary about functions with
zeros. 
For $0 < p < \infty$ and $f$ an analytic function on 
$\mathbb{D}$, we say that $f \in H^p$ if
\begin{equation}
\sup_r \frac{1}{2 \pi} \int |f(r e^{i \theta})|^p d \theta =
\|f\|_{H^p}^p < \infty.
\end{equation}
It is well known that given a nonzero
function $f \in H^p$ the zero sequence of $f$, denoted $(z_n)$,
is a Blaschke sequence. Letting $C_1$ denote the (possibly infinite)
Blaschke product with zeros $(z_n)$ there exists a function $g$ that is
analytic on $\mathbb{D}$ and has no zeros in $\mathbb{D}$ such that $f =
C_1 g$. Applying the previous theorem to $g$ we obtain the following.

\begin{corollary} Let $0 < p < \infty$ and let $f \in H^p$.
If $\overline{D(a_0, r_0)} \subset \mathbb{D}$ then $f$ can be uniformly
approximated on $D(a_0, r_0-\delta)$ by functions of the
form $c_0 C_1 C_2$ where $c_0$ is a constant, 
$C_1$
is the Blaschke factor of $f$, and $C_2$ is a Blaschke product with
zeros on the circle $\{z: |z - a_0| = r_0\}$.
\end{corollary}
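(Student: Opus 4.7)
The plan is to reduce this to the Blaschke-product version of the preceding theorem by stripping off the zeros of $f$ via the standard factorization for $H^p$. Concretely, since $f \in H^p$ is nonzero, its zero sequence $(z_n)$ is a Blaschke sequence, so the corresponding Blaschke product $C_1$ converges in $\mathbb{D}$, and $g := f / C_1$ is an analytic function on $\mathbb{D}$ with no zeros in $\mathbb{D}$. (Here $g$ absorbs the singular inner factor and the outer factor; nonvanishing is what matters for us, not outerness.)

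Next, observe that the hypothesis $\overline{D(a_0,r_0)} \subset \mathbb{D}$ means we can choose a slightly larger closed disc still contained in $\mathbb{D}$, so $g$ is analytic and nonvanishing in a \emph{neighborhood} of $\overline{D(a_0,r_0)}$. Thus the hypotheses of the preceding Blaschke-product theorem are met for $g$: for every $\varepsilon>0$ and $\delta>0$ there exist a constant $c_0$ and a finite Blaschke product $C_2$ with all zeros on $\{z:|z-a_0|=r_0\}$ such that
\begin{equation}
\sup_{z\in D(a_0,r_0-\delta)} |g(z) - c_0 C_2(z)| < \varepsilon.
\end{equation}

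Finally, multiply by $C_1$. Because $|C_1(z)|\le 1$ throughout $\mathbb{D}$, we obtain
\begin{equation}
\sup_{z\in D(a_0,r_0-\delta)} |f(z) - c_0 C_1(z) C_2(z)|
= \sup |C_1(z)|\,|g(z)- c_0 C_2(z)| < \varepsilon,
\end{equation}
which is the desired approximation.

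There is no real obstacle here: the work has already been done in the preceding theorem, and the only nontrivial input from $H^p$ theory is the Blaschke-convergence of the zero sequence, which guarantees that the factorization $f=C_1 g$ with $g$ analytic and nonvanishing on $\mathbb{D}$ actually exists. The estimate then propagates from $g$ to $f$ for free because $C_1$ is a contraction on $\mathbb{D}$.
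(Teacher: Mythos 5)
Your proposal is correct and is essentially the paper's own argument: factor $f = C_1 g$ with $C_1$ the Blaschke product over the zero sequence (a Blaschke sequence since $f\in H^p$) and $g$ nonvanishing analytic on $\mathbb{D}$, apply the preceding Blaschke-product approximation theorem to $g$, and multiply back by $C_1$. The only detail you add beyond the paper's one-line deduction is the explicit use of $|C_1|\le 1$ to transfer the estimate, which the paper leaves implicit.
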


\section{The Riemann zeta function and random matrix theory}\label{sec:rmt}

We describe a new approach and another motivation for studying approximations
by polynomials with zeros on the unit circle.  This involves a combination
of a universality result for the Riemann zeta function, and the
connection between the zeta function and random matrices.

We recall Voronin's universality result.

\begin{theorem}(Voronin~\cite{V,St}) Let $0<r<\frac14$ and suppose
$g$ is a nonvanishing continuous function on the disc $|s|\le r$
which is analytic in the interior.  Then for any $\varepsilon>0$,
\begin{equation}
\liminf_{T\to \infty} \frac{1}{T}\, \mathrm{meas}  \left\{
\tau\in [0,T]\ :\ \max_{|s|<r}
|\zeta(\tfrac34 + i\tau +s)  - g(s)|<\varepsilon \right\} > 0.
\end{equation}
\end{theorem}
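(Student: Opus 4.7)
The plan is to follow Bagchi's route to Voronin's theorem, using Theorem~\ref{thm:around0} to streamline its hardest step. Write $\sigma=\tfrac34$ and view the test disc $|s|\le r$ (with $r<\tfrac14$) as sitting inside the critical strip after translation by $\sigma$. I would begin with a Bohr--Jessen-type mean-square bound to reduce the problem to its truncated-Euler-product form: it suffices to prove that the set of $\tau\in[0,T]$ for which
\begin{equation}
\max_{|s|<r}\left|\prod_{p\le P}\bigl(1-p^{-\sigma-i\tau-s}\bigr)^{-1}-g(s)\right|<\varepsilon
\end{equation}
has positive lower density, for some sufficiently large $P$. Because $\{\log p:p\text{ prime}\}$ is $\mathbb{Q}$-linearly independent, the Kronecker--Weyl theorem implies that $(p^{-i\tau})_{p\le P}$ is equidistributed on the torus $\mathbb{T}^{\pi(P)}$; so it is enough to exhibit a \emph{nonempty open set} of phase vectors $(\omega_p)_{p\le P}\in\mathbb{T}^{\pi(P)}$ for which $\prod_{p\le P}(1-\omega_p p^{-\sigma-s})^{-1}$ uniformly approximates $g(s)$ on $|s|<r$.

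This is where Theorem~\ref{thm:around0} should enter. Applied to $g$ on $|s|\le r$, it yields
\begin{equation}
g(s)=A\prod_{j=1}^\infty (1+\xi_j s^j)^{\nu(j)}(1+\eta_j s^j),
\end{equation}
with $|\xi_j|=|\eta_j|=1$ and $\nu(j)\in\mathbb{Z}_{\ge 0}$. After taking logarithms, the goal becomes matching the Taylor coefficients of $\sum_{p\le P}\sum_{k\ge 1}k^{-1}\omega_p^k p^{-k\sigma}e^{-ks\log p}$ with those arising from the product above. The structural parallel between the two expansions — unit-modulus phases weighted by nonnegative integer multiplicities — is what suggests Theorem~\ref{thm:around0} is the right tool, and the algorithm of its proof (fix the leading unmatched coefficient by an application of Lemma~\ref{lem:annulus}, then peel off and iterate) can be reused at the level of $\log g$ and the logarithms of the Euler factors.

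The main obstacle is the stability of this inductive matching. Every $\omega_p$ chosen to fix a low-order Taylor coefficient perturbs every higher-order coefficient, just as in the proof of Theorem~\ref{thm:around0}, where choosing $\xi_j,\eta_j,\nu(j)$ for $j\le K$ perturbs every later $b_K$. There, the estimate of Lemma~\ref{lem:boundnu} kept the multiplicities under control; for Voronin the analog is a conditional-convergence (Pechersky-type) rearrangement theorem in the Bergman space of analytic functions on $|s|<r$, which allows one to assign phases to infinitely many primes so that the tails cancel appropriately. The restriction $r<\tfrac14$ is essential here: it ensures $\operatorname{Re}(\sigma+s)>\tfrac12$ throughout the disc, so the series $\sum_p p^{-2\operatorname{Re}(\sigma+s)}$ converges and the rearrangement argument is legitimate. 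Once this support/denseness statement is established, the Kronecker--Weyl equidistribution of the first step converts it into a positive lower density of admissible $\tau$, completing the proof.
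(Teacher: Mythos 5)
The paper does not prove this theorem; it is quoted from Voronin and Steuding as motivation for Section~\ref{sec:rmt}, so there is no internal proof to compare against. Judged on its own, your proposal is a recognizable outline of the standard Bagchi-style proof, but it is an outline only, and the one point at which you try to bring the paper's machinery to bear does not work as described.

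Concretely: the reduction from $\zeta(\tfrac34+i\tau+s)$ to a truncated Euler product is itself a nontrivial theorem (it requires a mean-square approximation of Montgomery--Vaughan type, or Bagchi's ergodic limit theorem), and converting ``a nonempty open set of phase vectors works'' into ``a positive lower density of $\tau$ works'' requires Kronecker--Weyl equidistribution on the infinite-dimensional torus together with a continuity argument; both steps are asserted rather than carried out. More seriously, Theorem~\ref{thm:around0} cannot be substituted for the denseness step in the way you suggest. The induction in its proof works because the factor $(1+\xi_j z^j)^{\nu(j)}$ leaves the first $j-1$ Taylor coefficients of the logarithmic derivative untouched --- the system being solved is triangular --- and because Lemma~\ref{lem:annulus} supplies, at each stage, two free unimodular parameters plus a free nonnegative integer multiplicity. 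Neither feature survives translation to the Euler product: the factor attached to a prime $p$, namely $\log(1-\omega_p p^{-\sigma-s})^{-1}=\sum_{k\ge1} k^{-1}\omega_p^k p^{-k\sigma}e^{-ks\log p}$, contributes to \emph{every} Taylor coefficient in $s$ beginning with the constant term, and each prime carries exactly one phase $\omega_p$ with no multiplicity to adjust, so Lemma~\ref{lem:annulus} does not apply and the triangular structure is gone. The step you label ``the main obstacle'' is in fact the entire content of the denseness half of Voronin's theorem, and the tool that resolves it in the literature is Pechersky's rearrangement theorem for conditionally convergent series in a Hilbert space of analytic functions (this is where $r<\tfrac14$, i.e.\ $\Re(\sigma+s)>\tfrac12$, is actually used). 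Naming that theorem is not the same as supplying the argument; as written, the proposal has a gap precisely at the theorem's core.
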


Here $\zeta(s)$ is the Riemann zeta function, defined by
\begin{equation}\label{eqn:zetaDS}
\zeta(s)=\sum_{n=1}^\infty \frac{1}{n^s}
\end{equation}
for $\Re(s)>1$. 
The standard reference is Titchmarsh~\cite{T}
and we merely cite a few facts that are relevant to our discussion here.
The zeta function continues to a meromorphic
function with a single simple pole at~$s=1$.
The functional equation relates $\zeta(s)$ to $\zeta(1-s)$.
Combining the functional equation with~\eqref{eqn:zetaDS}
shows that the most interesting behavior of the
zeta function will occur in the
``critical strip'' $0< \Re(s) < 1$.  Voronin's 
theorem says that the zeta function is universal in the
right half of the critical strip.  Furthermore, any given 
nonvanishing analytic function on a disc of radius $<\frac14$
is closely approximated by a \emph{positive proportion} of shifts
of $\zeta(\frac34 + i s)$.

The Riemann Hypothesis is the conjecture that the zeros
of the zeta function in the critical strip all lie
on the ``critical line'' $\Re(s)=\frac12$.
Recently there has been significant progress in
understanding (i.e., conjecturing) the behavior of the zeta function
in the critical strip
by modeling the zeta function by the characteristic
polynomial of a random unitary matrix~\cite{KS,CFKRS}.
Specifically, $\zeta(\frac12 + i T + z)$, for $\Re(z)>0$, is modeled
statistically
by $\Lambda(e^{-z})$, where $\Lambda$ is the characteristic
polynomial of a random matrix from the unitary group~$U(N)$,
chosen uniformly with respect to Haar measure, where
$N$ is approximately~$\log(T/2\pi)$.

Thus, Voronin's theorem and the above discussion suggests the
theorem that characteristic polynomials
of unitary matrices (ie, polynomials having all zeros on the
unit circle) can approximate nonvanishing functions 
in the unit disc.  But not quite: the normalization
used in analytic number theory has $\Lambda(0)=1$, corresponding
to the fact that $\lim_{\Re(s)\to\infty} \zeta(s)=1$.
Thus, characteristic polynomials can approximate nonvanishing functions on
$D(a_0,r_0)\subset \mathbb{D}$ provided $0\not\in \overline{D(a_0,r_0)}$.
This of course follows from the results described at the beginning
of Section~\ref{sec:around0}, but we suggest that it would be interesting
to give a purely random matrix proof.
That is, to understand the distribution
of
\begin{equation}\label{eqn:rmttuple}
\left(\frac{\Lambda'}{\Lambda}(x), \ldots, \frac{\Lambda^{(n)}}{\Lambda}(x)
\right), 
\end{equation}
with $0< x <1$ fixed,
for $\Lambda$ the characteristic polynomial of a random matrix
in~$U(N)$ as $N\to\infty$.  
The approximation theorems show that each $z\in \C^n$ is of the above form
if $N$ is sufficiently large.
But a purely random matrix calculation, providing a probability
distribution for~\eqref{eqn:rmttuple}, could give yet another proof.

It would also be interesting to prove a random matrix analogue
of the fact that a positive density of shifts of the zeta function
approximate a given function.  
Suppose $f$ is a nonvanishing analytic function on the unit disc with
$f(0)=1$, and suppose $0<r<1$ and 
$\varepsilon>0$ are given.
If $e^N$ matrices $U\in U(N)$ are chosen randomly with respect to
Haar measure, the problem is to determine the probability that at least one
of those $e^N$ matrices satisfies
\begin{equation}
|\det(I-U^* z) - f(z)|<\varepsilon \ \ \text{ for all } \ \ |z|<r .
\end{equation}
See~\cite{FGH} for an explanation of why $e^N$ matrices are
chosen from~$N(U)$.  Voronin's universality result suggests
that if $N$ is sufficiently large then the above probability
is positive and bounded below independent of~$N$.

If this random matrix approach is successful, it will produce polynomials
of a very different form than those constructed in the proof of
Theorem~\ref{thm:around0}.  That proof involved polynomials with high
multiplicity in their zeros.  But the characteristic polynomial of
a random unitary matrix has, with probability~1, only simple
zeros, and those zeros tend to be very evenly spaced on the
unit circle.

\end{document}